\documentclass{article}

\usepackage[all]{xy}
\usepackage[english]{babel}
\usepackage{amsmath}
\usepackage{amsthm}
\usepackage{amssymb}


\newcommand{\comments}[1]{} 

\newtheorem{thm}{Theorem}


\theoremstyle{definition}

\theoremstyle{remark}
\newtheorem{rem}{Remark}

\theoremstyle{remark}
\newtheorem{ex}{Example}



\def\cM{\mathcal M}

\def\cC{\mathcal C}

\def\cH{\mathcal H}
\def\cV{\mathcal V}

\def\bR{\mathbb R} 

\def\bB{\mathbb B}

\def\bN{\mathbb N}

\def\bC{\mathbb C}



\def\Ker{\operatorname{Ker}}

\def\cliff{\cC\ell}

\def\mand{\text{\ \ \ and\ \ \ }}

\def\clifford{\cC\ell}

\def\pa{\partial}

\def\bul{\;\bullet}
\def\wed{\;\wedge}
\def\pod{\underline}

\def\la{\lambda}

\begin{document}

\title{Orthogonal Appell bases for Hodge-de Rham systems in Euclidean spaces}

\author{R. L\' avi\v cka
\thanks{
Faculty of Mathematics and Physics, Charles University, Sokolovsk\'a 83, 186 75 Praha 8, Czech Republic;
Email: \texttt{lavicka@karlin.mff.cuni.cz} Tel: +420 221 913 204  Fax: +420 222 323 394 }}

\date{}

\maketitle

\begin{abstract}
Recently the Gelfand-Tsetlin construction of orthogonal bases has been explicitly described
for the spaces $\cH^s_k(\bR^m)$ of $k$-homogeneous polynomial solutions of the Hodge-de Rham system in the Euclidean space $\bR^m$ which take values in the space of $s$-vectors. In this paper, we give another construction of these bases and, mainly, we show that the bases even form complete orthogonal Appell systems. 
Moreover, we study the corresponding Taylor series expansions. 
As an application, we construct quite explicitly orthogonal bases for homogeneous solutions of an arbitrary generalized Moisil-Th\'eodoresco system in $\bR^m.$  

\medskip\noindent{\bf Keywords:} Hodge-de Rham  system, Gelfand-Tsetlin basis, Appell property, Taylor series, generalized Moisil-Th\'eodoresco system 

\medskip\noindent{\bf MSC classification:} 30G35, 58A10, 22E70, 33C50
\end{abstract}

\section{Introduction}

In this paper, we study properties of the so-called Gelfand-Tsetlin bases for the spaces $\cH^s_k(\bR^m)$ of $k$-homogeneous polynomial solutions of the Hodge-de Rham system in the Euclidean space $\bR^m$ which take values in the space of $s$-vectors. A~construction of such bases has been recently described in \cite{GTHdR}.
In particular, we show that these bases form complete orthogonal Appell systems.
Recall that basis elements are said to possess the Appell property or form the Appell system if their derivatives are equal to a~multiple of another basis element.
We work within the context of Clifford analysis but, following \cite{BDS}, we can easily translate all the obtained results into the language of differential forms.

For a~historical account of constructions of orthogonal bases and the Appell systems in Clifford analysis, we refer to \cite{BGLS,lavCAOT}.
Let me just mention that the first construction of orthogonal bases for Clifford algebra valued monogenic functions even in any dimension is due to F. Sommen, see \cite{som,DSS}.
The Appell systems of monogenic polynomials were for the first time investigated by
H.~Malonek (see \cite{CM07}, \cite{FCM}, \cite{FM}).
In particular, in \cite{CM08}, I. Ca\c c\~ao and H. Malonek constructed an
orthogonal Appell basis  for  solutions of the
Riesz system in dimension $3$. Moreover, in \cite{lavRiesz}, it is shown that Gelfand-Tsetlin bases in this case possess the Appell property not only with respect to one variable but even with respect to all variables.
In \cite{BG}, S. Bock and K.~G\"urlebeck described orthogonal Appell bases for quaternion valued monogenic
functions in $\bR^3$. 
Later on, in \cite{lavCAOT}, complete orthogonal Appell systems have been constructed for spinor valued and Clifford algebra valued monogenic functions in any dimension.
Analogous results in Hermitian Clifford analysis have been obtained in \cite{ckH,OGinH}. 


Clifford analysis is a~systematic study of the so-called monogenic functions. 
For an account of Clifford analysis, we refer to the books \cite{DSS, GM, GHS}.
Let the vectors $e_1,\ldots,e_m$ form an orthonormal basis of the Euclidean space $\bR^m.$
Denote by $\bR_{0,m}$ the real Clifford algebra over $\bR^m$ satisfying the relations
$$e_ie_j+e_je_i=-2\delta_{ij}$$
and by $\bC_m$ the corresponding complex Clifford algebra. 
In what follows, $\clifford_m$ stands for either $\bR_{0,m}$ or $\bC_m.$
As usual, we identify a~vector $x=(x_1,\ldots,x_m)$ of $\bR^m$ with the element $x_1e_1+\cdots+x_m e_m$ of $\cliff_m$.
A~function $f$ defined and continuously differentiable in an open region $\Omega$ of $\bR^{m}$ and taking values in the Clifford algebra $\cliff_m$ is called (left) monogenic in $\Omega$ if it satisfies the Dirac equation $\pa f= 0$ in $\Omega$. Here the Dirac operator $\pa$ is defined as
\begin{equation}\label{Dirac}
\pa=e_1\pa_{x_1}+\cdots+e_m\pa_{x_m}.
\end{equation} 
Obviously, the Dirac operator $\pa$ factorizes the Laplacian $\Delta=\pa_{x_1}^2+\cdots+\pa_{x_m}^2$ in the sense that $\Delta = - \partial^2$.
Hence each monogenic function $f$ is (componentwise) harmonic, namely, it satisfies the Laplace equation $\Delta f=0$. 
As is well-known, inside the Clifford algebra $\cliff_m$ we can realize the Pin group $ Pin(m)$ as the set of finite products of unit vectors of $\bR^m$ endowed with the Cliffford multiplication.
Moreover, the group $ Pin(m)$ is a~double cover of the orthogonal group $O(m)$.
For monogenic functions $f(x),$ there are two natural actions of the group $Pin(m)$, namely, the so-called $L$-action, given by
\begin{equation}\label{Laction}
[L(r)(f)]( x)=r\,f(r^{-1} x\; r),\ r\in  Pin(m)\text{\ \ and\ \ } x\in\bR^m,
\end{equation}
and the $H$-action, given by
\begin{equation}\label{Haction}
[H(r)(f)](x)=r\;f(r^{-1} x\; r)\;r^{-1},\ r\in Pin(m)\text{\ \ and\ \ } x\in\bR^m.
\end{equation}

It turns out to be interesting to study monogenic functions taking values in a~given subspace $V$ of $\cliff_m$. 
As monogenic functions are real analytic it is  important to understand the structure of monogenic polynomials called often spherical monogenics.
Indeed, denote by $\cM_k(\bR^m,V)$ the space of 
$k$-homogeneous spherical monogenics $P:\bR^m\to V$.
Then these spaces are at least locally basic building blocks for $V$-valued monogenic functions.
To illustrate this fact, let $\bB_m$ be the unit ball in $\bR^m$ and let $L^2(\bB_m,V)$ be the space of square-integrable functions $f:\bB_m\to V$, endowed with a~scalar inner product
\begin{equation}\label{L2product}
(f,g)=\int_{\bB_m}[\bar f g]_0\;d\la^m.
\end{equation}
Here $\la^m$ is the Lebesgue measure in $\bR^m$ and, for $a\in\cliff_m,$ $\bar a$ stands for its Clifford conjugate and $[a]_0$ for its scalar part. 
Moreover, let $L^2(\bB_m,V)\cap \Ker \pa$ be 
the (real or complex) Hilbert space of $L^2$-integrable monogenic functions $f:\bB_m\to V$. 
As is well-known, 
the orthogonal direct sum
$$\bigoplus_{k=0}^{\infty} \cM_k(\bR^m,V)$$
is dense in the space $L^2(\bB_m,V)\cap \Ker \pa$. 
Hence to construct an orthogonal basis for the space $L^2(\bB_m,V)\cap \Ker \pa$ it is sufficient to find orthogonal bases in all finite dimensional subspaces $\cM_k(\bR^m,V)$.

It seems to be the most interesting when $V$ is taken as a~subspace of $\cliff_m$ invariant with respect to
the $L$-action or the $H$-action.
It is well-known (see e.g.\ \cite{DSS}) that, under the $L$-action, the spaces of homogeneous spinor valued spherical monogenics play a~role of building blocks.
On the other hand, when the given subspace  $V$ is invariant with respect to the $H$-action, we have that $V$-valued monogenic functions are just solutions of
the corresponding generalized Moisil-Th\'eodoresco system, see Section \ref{sGMT} for details. 
Generalized Moisil-Th\'eodoresco systems introduced by R. Delanghe have been recently intensively studied
(see e.g.\ \cite{BRDS, R2, DLS, DLS2, lav_isaac09, DLS3, GTHdR}).
It is well-known that the spaces $\cH^s_k(\bR^m)$ of $s$-vector valued solutions of the Hodge-de Rham system play a~role of building blocks in this case.
Moreover, for 1-vector valued functions, the Hodge-de Rham system coincides with the Riesz system, which has been carefully studied 
(see \cite{cno, R1, cac, cac10, mor09, GM10b}). 

In Section \ref{sBranch}, we recall that standard orthogonal bases for spherical harmonics are basic examples of the so-called Gelfand-Tsetlin bases. 
In Section \ref{sGT_HdR}, 
we  construct  Gelfand-Tsetlin bases for the spaces $\cH^s_k(\bR^m)$ (see Theorem \ref{GT_HdR}). 
The key step for the construction is to understand the so-called branching for Hodge-de Rham systems. 
This has been done in \cite{GTHdR} using the Cauchy-Kovalevskaya method. 
In Theorem \ref{tbranch_HdR}, we give an alternative proof based on the branching for spherical monogenics. 
The branching for spherical monogenics in $\bR^m$ is nothing else than their decomposition into spherical monogenics in $\bR^{m-1}$ multiplied by certain embedding factors and, analogously, for other cases.
Furthermore, we show that the Gelfand-Tsetlin bases form complete orthogonal Appell systems in this case and
we study the corresponding Taylor series expansions (see Theorems \ref{tAP_HdR} and \ref{tTaylor_HdR}). 
As an application, we construct quite explicitly orthogonal bases for solutions of an arbitrary generalized Moisil-Th\'eodoresco system (see Theorem \ref{OG_GMT}).

\section{Generalized Moisil-Th\'eodoresco systems}\label{sGMT}

The Clifford algebra $\cliff_m$ can be viewed naturally as the graded associative algebra
$$\cliff_m=\bigoplus_{s=0}^m\cliff_m^s.$$
Here $\cliff_m^s$ stands for the space of $s$-vectors in $\cliff_m.$ Actually, under the $H$-action, the spaces $\cliff_m^s$ are mutually inequivalent irreducible submodules of $\cliff_m$.
For a 1-vector $u$ and an $s$-vector $v,$ the Clifford product $u v$ splits into the sum of an $(s-1)$-vector $u\bullet v$ and an $(s+1)$-vector $u\wedge v.$ Indeed, we have that $uv=u\bullet v+u\wedge v$ with
$$u\bullet v=\frac 12(uv-(-1)^svu)\text{\ \ and\ \ }u\wedge v=\frac 12(uv+(-1)^svu).$$
By linearity, we extend the so-called inner product $u\bullet v$ and the outer product $u\wedge v$ for a 1-vector $u$ and an arbitrary Clifford number $v\in\cliff_m.$ 
In particular, we can split the left multiplication by a 1-vector $ x$ into the outer multiplication $( x\wed)$ and the inner multiplication $( x\bul),$ that is, $$ x=( x\wed)+( x\bul) .$$
Analogously,
the Dirac operator $\pa$ can be split also into two parts
$\pa=\pa^++\pa^-$ where
$$\pa^+P=\sum_{j=1}^m e_j\wedge(\pa_{x_j}P)\text{\ \ and\ \ }\pa^-P=\sum_{j=1}^m e_j\bullet(\pa_{x_j}P).$$
Let us remark that the operators $( x\wed)$, $( x\bul)$, $\pa^+$ and $\pa^+$ are basic examples of invariant operators with respect to the $H$-action (see \cite{DLS3}).
Furthermore, for $s$-vector valued polynomials $P$, the Dirac equation $\pa P=0$ is equivalent to the system of equations $$\pa^+ P=0,\ \ \pa^- P=0$$ we call the Hodge-de Rham system. This terminology is legitimate because, after the translation into the language of differential forms explained in \cite{BDS}, the operators $\pa^+$ and $\pa^-$ correspond to the standard de Rham differential $d$ and its codifferential $d^*$ for differential forms.

In contrast with the spinor case, it is interesting to study $V$-valued monogenic functions not only for irreducible subspaces $V$ of the Clifford algebra $\cliff_m$.
Let $V$ be an arbitrary $H$-invariant subspace of $\cliff_m$, that is, for some subset $S$ of $\{0,1,\ldots,m\}$, we have that $V=\cliff_m^S$ where
$$\cliff_m^S=\bigoplus_{s\in S}\cliff_m^s.$$
Then the corresponding generalized Moisil-Th\'eodoresco system  is defined as the Dirac equation $\pa f=0$ for $V$-valued functions $f$. 
In particular, let $\cH^s_k(\bR^m)$ be the space of $k$-homogeneous monogenic polynomials $P:\bR^m\to \cliff_m^s$.
Then the spaces $\cH^s_k(\bR^m)$ are just formed by homogeneous solutions of the Hodge-de Rham system.
Moreover, it is well-known that, under the $H$-action, the spaces $\cH^s_k(\bR^m)$ form irreducible modules 
and all non-trivial modules $\cH^s_k(\bR^m)$ are mutually inequivalent, see \cite{DLS}.
The following result  shows that the spaces $\cH^s_k(\bR^m)$ play a~role of building blocks in this case (see \cite{lav_isaac09, DLS2}).
Recall that $\cM_k(\bR^m,\cliff_m^S)$ is the space of $k$-homogeneous monogenic polynomials $P:\bR^m\to \cliff_m^S$.

\begin{thm}\label{tGMT} 
Let $S\subset\{0,1,\ldots,m\}$ and let $S'=\{s:s\pm 1\in S\}.$
Under the $H$-action of $Pin(m)$, the space $\cM_k(\bR^m,\cliff_m^S)$ decomposes into inequivalent irreducible pieces as
\begin{equation*}
\cM_k(\bR^m,\cliff_m^S)=\left(\bigoplus_{s\in S} \cH^s_k(\bR^m)\right)\oplus
\left(\bigoplus_{s\in S'}((x\wed)+\beta^{s,m}_{k-1}(x\bul))\,\cH^s_{k-1}(\bR^m)\right)
\end{equation*}
with $\beta^{s,m}_{k}=-(k+m-s)/(k+s)$. 

In particular, this decomposition is orthogonal with respect to any invariant inner product, including the $L^2$-inner product \eqref{L2product}.
\end{thm}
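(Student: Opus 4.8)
The plan is to realize both families of summands as subspaces of $\cM_k(\bR^m,\cliff_m^S)$, to verify that the resulting sum is direct and consists of pairwise inequivalent irreducible $H$-submodules, to match dimensions so that these pieces exhaust $\cM_k(\bR^m,\cliff_m^S)$, and finally to deduce orthogonality from Schur's lemma. For $s\in S$ the inclusion $\cH^s_k(\bR^m)\subseteq\cM_k(\bR^m,\cliff_m^S)$ is immediate, since a Hodge-de Rham solution is monogenic with values in $\cliff_m^s\subseteq\cliff_m^S$. For $s\in S'$ one has $1\le s\le m-1$ (because $s\pm1\in S\subseteq\{0,\dots,m\}$); fix $Q\in\cH^s_{k-1}(\bR^m)$ and put $P=((x\wed)+\beta\,(x\bul))Q$, which then has values in $\cliff_m^{s-1}\oplus\cliff_m^{s+1}\subseteq\cliff_m^S$. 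Writing $\pa=\pa^++\pa^-$ and separating $\pa P=0$ into its parts of grade $s+2$, $s$ and $s-2$ gives the three conditions
\begin{equation*}
\pa^+((x\wed)Q)=0,\qquad \pa^-((x\bul)Q)=0,\qquad \pa^-((x\wed)Q)+\beta\,\pa^+((x\bul)Q)=0 .
\end{equation*}
The first two hold for every $Q\in\cH^s_{k-1}(\bR^m)$: as operators, $\pa^+$ and $(x\wed)$ anticommute (they are the de Rham differential and exterior multiplication by the closed $1$-form $\sum_j x_j\,dx_j$), so $\pa^+((x\wed)Q)=-(x\wed)(\pa^+Q)=0$, and dually $\pa^-((x\bul)Q)=-(x\bul)(\pa^-Q)=0$, using $\pa^\pm Q=0$. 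For the third, the elementary contraction identities $\sum_j e_j\wed(e_j\bul Q)=-sQ$ and $\sum_j e_j\bul(e_j\wed Q)=-(m-s)Q$ on $s$-vectors, together with $\sum_j x_j\pa_{x_j}Q=(k-1)Q$ and $\pa^\pm Q=0$, yield $\pa^-((x\wed)Q)=-(k+m-s-1)Q$ and $\pa^+((x\bul)Q)=-(k+s-1)Q$; hence the third condition reads $(k+m-s-1)+\beta(k+s-1)=0$, with unique solution $\beta=-(k-1+m-s)/(k-1+s)=\beta^{s,m}_{k-1}$ (the division being legitimate since $k+s-1\ge1$). Thus every summand on the right-hand side lies in $\cM_k(\bR^m,\cliff_m^S)$.

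For directness and irreducibility, recall that $(x\wed)$ and $(x\bul)$, hence $(x\wed)+\beta^{s,m}_{k-1}(x\bul)$, are invariant operators for the $H$-action. Moreover $\beta^{s,m}_{k-1}\neq0$ (since $s\le m-1$ forces $k-1+m-s\ge k\ge1$), and $(x\wed)+\beta^{s,m}_{k-1}(x\bul)$ is injective on $\cH^s_{k-1}(\bR^m)$: if it annihilates $Q$, then, comparing grades, $(x\wed)Q=0$ and $(x\bul)Q=0$, so $xQ=(x\wed)Q+(x\bul)Q=0$ and therefore $Q=0$ (multiply by $x^{-1}$ off the origin). Hence each mixed summand is an irreducible $H$-module isomorphic to $\cH^s_{k-1}(\bR^m)$. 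Since the nontrivial modules $\cH^s_k(\bR^m)$ are irreducible and pairwise inequivalent \cite{DLS}, all nonzero summands on the right-hand side are pairwise inequivalent irreducibles, so their sum is automatically direct. (Alternatively, directness can be checked by hand: in a vanishing sum of such elements one restricts to each fixed grade $t$ and applies $\pa^+$ and $\pa^-$, which by the formulas above annihilate the two ``mixed'' contributions in that grade and thereby force every term to vanish.)

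It remains to show that these pieces exhaust $\cM_k(\bR^m,\cliff_m^S)$; given the inclusion and directness already in hand, it is enough to match dimensions, and this is the technical core and the step I expect to be the main obstacle. A convenient route uses exactness of the polynomial Dirac (de Rham) complex: setting $T=\{t:|t-s|=1\text{ for some }s\in S\}\cap\{0,\dots,m\}$, one shows $\Image\big(\pa\colon\cP_k(\bR^m,\cliff_m^S)\to\cP_{k-1}(\bR^m,\cliff_m^T)\big)=\cM_{k-1}(\bR^m,\cliff_m^T)$ -- the inclusion $\subseteq$ is immediate from $\pa^2=0$, while the reverse inclusion is the substantive point -- whence $\dim\cM_k(\bR^m,\cliff_m^S)=\dim\cP_k(\bR^m,\cliff_m^S)-\dim\cM_{k-1}(\bR^m,\cliff_m^T)$. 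With the base case $\cM_0(\bR^m,\cliff_m^S)=\cliff_m^S$ this recursion determines $\dim\cM_k(\bR^m,\cliff_m^S)$, and using the known dimension formula for $\cH^s_k(\bR^m)$ one checks that the total dimension of the right-hand summands satisfies the same recursion with the same initial value. (Equivalently, one may decompose $\cP_k(\bR^m)\otimes\cliff_m^S$ into $SO(m)$-irreducibles by a Pieri-type rule and match the constituents of $\cM_k(\bR^m,\cliff_m^S)$ against the $SO(m)$-types of the spaces $\cH^s_k(\bR^m)$.)

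Finally, for the orthogonality statement: any invariant inner product makes each summand an invariant subspace, and two invariant subspaces carrying inequivalent irreducible representations are mutually orthogonal by Schur's lemma; hence the decomposition is orthogonal with respect to every invariant inner product, in particular the $L^2$-product \eqref{L2product}.
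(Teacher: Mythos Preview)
The paper does not actually prove this theorem; it is quoted from the references \cite{lav_isaac09, DLS2} (the Fischer decomposition under the $H$-action), so there is no in-paper argument to compare against. I will therefore assess your proof on its own.

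Your treatment of the containment (Step~1), the irreducibility and directness (Step~2), and the orthogonality via Schur's lemma (Step~4) is correct. The contraction identities $\sum_j e_j\wedge(e_j\bullet Q)=-sQ$ and $\sum_j e_j\bullet(e_j\wedge Q)=-(m-s)Q$ are right, and your determination of the constant $\beta^{s,m}_{k-1}$ is clean; the injectivity argument for $(x\wed)+\beta\,(x\bul)$ on $\cH^s_{k-1}$ is also fine.

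The genuine gap is Step~3. You acknowledge it yourself, but let me make the difficulty explicit: the surjectivity statement
\[
\pa\colon\ \cP_k(\bR^m,\cliff_m^S)\ \twoheadrightarrow\ \cM_{k-1}(\bR^m,\cliff_m^T)
\]
is not an independent input; it is essentially equivalent to the $H$-action Fischer decomposition proved in \cite{DLS2}, which is precisely the source of the theorem. Knowing only the classical (ungraded) Fischer decomposition $\cP_k(\bR^m,\cliff_m)=\bigoplus_j x^{k-j}\cM_j(\bR^m,\cliff_m)$ does not immediately give the $S$-restricted version, because left multiplication by $x$ mixes grades. Likewise, your alternative suggestion (a Pieri-type count of $SO(m)$-types) is a valid route, but it is again the content of the cited papers rather than an elementary bookkeeping step. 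So as written, the exhaustion part is a sketch that defers to exactly the results the paper already cites. If you want a self-contained proof, the cleanest line is to establish the full case $S=\{0,\dots,m\}$ first (which is a direct-sum-plus-dimension argument using $\dim\cH^s_k$) and then observe that projection onto grade components, combined with the fact that each irreducible summand has a prescribed set of grades, forces the pieces not compatible with $S$ to vanish; but that last observation itself needs a short argument, since distinct summands may contribute to the same grade.
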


By Theorem \ref{tGMT}, to construct an orthogonal basis for the (real or complex) Hilbert space $L^2(\bB_m,\cliff_m^S)\cap \Ker \pa$ it obviously suffices to find orthogonal bases in the spaces $\cH^s_k(\bR^m)$, see Theorem \ref{OG_GMT} of Section \ref{sGT_HdR}.


\section{Branching for spherical harmonics and monogenics}\label{sBranch}

In this section, we describe the branching for spherical harmonics and monogenics.
But first let us recall a~standard construction of orthogonal bases for spherical harmonics. 
Denote by $ H_k(\bR^m)$ the space of complex valued harmonic polynomials in $\bR^m$ which are $k$-homogeneous. 
Let $(e_1,\ldots,e_m)$ be an orthonormal basis of the Euclidean space $\bR^m$.
Then the construction of an orthogonal basis for the space $ H_k(\bR^m)$ is based on the following decomposition (see \cite[p. 171]{GM})
\begin{equation}\label{branchHarm}
 H_k(\bR^m)=
\bigoplus_{j=0}^k F^{(k-j)}_{m,j} H_{j}(\bR^{m-1}).
\end{equation}
This decomposition is orthogonal with respect to the $L^2$-inner product on the unit ball $\bB_m$ in $\bR^m$ and
the embedding factors $F^{(k-j)}_{m,j}$ are defined as the polynomials  
\begin{equation}\label{EFHarm}
F^{(k-j)}_{m,j}(x)=\frac{(j+1)_{k-j}}{(m-2+2j)_{k-j}}\; |x|^{k-j} C^{m/2+j-1}_{k-j}(x_m/|x|),\ x\in\bR^m.
\end{equation} 
Here $x=(x_1,\ldots, x_m)$, $|x|=\sqrt{x_1^2+\cdots+x_m^2}$ and $C^{\nu}_k$ is the Gegenbauer polynomial given by
\begin{equation}\label{gegenbauer}
C^{\nu}_k(z)=\sum_{i=0}^{[k/2]}\frac{(-1)^i(\nu)_{k-i}}{i!(k-2i)!}(2z)^{k-2i}\text{\ \ with\ \ }
(\nu)_{k}=\nu (\nu+1)\cdots (\nu+k-1).
\end{equation}
The decomposition \eqref{branchHarm} shows that spherical harmonics in $\bR^m$ can be expressed in terms of spherical harmonics in $\bR^{m-1}$, that is,
for each $P\in H_k(\bR^m)$, we have that
$$P(x)=P_k(\pod x)+F^{(1)}_{m,k-1}(x) P_{k-1}(\pod x)+\cdots+F^{(k)}_{m,0}(x) P_{0}(\pod x),\ x=(\pod x, x_m)\in\bR^m$$
for some  polynomials $P_j\in H_{j}(\bR^{m-1})$. Of course, here $F^{(0)}_{m,k}=1$ and $\pod x=(x_1,\ldots, x_{m-1})$.

Applying the decomposition \eqref{branchHarm} for several times, we easily construct an orthogonal basis of the space $ H_k(\bR^m)$ by induction on the dimension $m$. Indeed, as the polynomials $(x_1\mp ix_2)^k$ form an orthogonal basis of the space $ H_k(\bR^2)$ an orthogonal basis of the space $ H_k(\bR^m)$ is formed by the polynomials
\begin{equation}\label{GTHarm}
h_{k,\mu}(x)=(x_1\mp ix_2)^{k_2}\prod^m_{r=3}F^{(k_r-k_{r-1})}_{r,k_{r-1}}
\end{equation}
where $\mu$ is an arbitrary sequence of integers $(k_{m-1}, \ldots, k_3,\pm k_2)$ such that $k=k_m\geq k_{m-1}\geq\cdots\geq k_3\geq k_2\geq 0$.

\medskip
From the point of view of representation theory, the space $ H_k(\bR^m)$ forms an irreducible module under the action of the group $Pin(m)$, defined by
$$
[h(s)(P)](x) = P(s^{-1}xs),\ s\in Pin(m)\text{\ \ and\ \ }x\in\bR^m.
$$
Moreover, the decomposition \eqref{branchHarm} is the branching of the module $H_k(\bR^m)$, that is, an irreducible decomposition of the module $H_k(\bR^m)$
under the action of the group $Pin(m-1)$. 
Here $Pin(m-1)$ is realized as the subgroup of $Pin(m)$ describing orthogonal transformations of $\bR^m$ fixing the last basis vector $e_m.$ 

In quite an analogous way, we can construct an orthogonal basis for any irreducible (finite-dimensional) $Pin(m)$-module $\cV$.  
Indeed, any such module $\cV$ always has a~multiplicity free irreducible decomposition into $Pin(m-1)$-submodules. In particular, this decomposition is orthogonal with respect to any invariant inner product given on $\cV$. The obtained orthogonal basis is called a~Gelfand-Tsetlin basis of the module $\cV$ (see \cite{GT}).
See \cite{lavCAOT} for details.

\medskip
In \cite{lavCAOT}, the Gelfand-Tsetlin construction  of orthogonal bases for  spherical monogenics is explained. 
The key ingredient of the construction is the branching for spherical monogenics, 
which is described in the following theorem (see \cite[Theorem 2.2.3, p. 315]{DSS}).  

\begin{thm}\label{tbranch_CA}
The space $\cM_k(\bR^m,\cliff_m)$ has an orthogonal decomposition
\begin{equation}
\cM_k(\bR^m,\cliff_m)=
\bigoplus_{j=0}^k X^{(k-j)}_{m,j}\cM_{j}(\bR^{m-1},\cliff_m).
\end{equation}
Here the embedding factors $X^{(k-j)}_{m,j}$ are defined as the polynomials
\begin{equation}\label{EF_CA}
X^{(k-j)}_{m,j}(x)=F^{(k-j)}_{m,j}+
\frac{j+1}{m+2j-1}\;F^{(k-j-1)}_{m,j+1}\;\pod x e_m,\ x=(\pod x, x_m)\in\bR^m
\end{equation}
with $\pod x=x_1e_1+\cdots+x_{m-1} e_{m-1}$, $F^{(k-j)}_{m,j}$ defined in \eqref{EFHarm} and $F^{(-1)}_{m,k+1}=0$.
\end{thm}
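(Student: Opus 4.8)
The plan is to derive the branching for spherical monogenics from the branching for spherical harmonics \eqref{branchHarm}, using the classical Almansi-Fischer-type decomposition that expresses each $k$-homogeneous spherical monogenic in terms of two harmonic components. Recall that every $P\in\cM_k(\bR^m,\cliff_m)$ can be written uniquely as $P=P_k+\underline{x}\,Q_{k-1}$ where $P_k\in H_k(\bR^m,\cliff_m)$ and $Q_{k-1}\in H_{k-1}(\bR^m,\cliff_m)$ satisfy a compatibility relation forced by $\pa P=0$; equivalently, $\cM_k(\bR^m,\cliff_m)$ sits inside $H_k(\bR^m,\cliff_m)$ as the kernel of $\pa$, and $\pa$ restricted to $k$-homogeneous harmonics has a controllable image. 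So the first step is to fix this identification precisely and record the action of $\pa$ on the pieces $F^{(k-j)}_{m,j}H_j(\bR^{m-1},\cliff_m)$.

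Next I would plug the harmonic branching \eqref{branchHarm} (applied componentwise, i.e. with $\cliff_m$-valued harmonics) into this description and compute how the Dirac operator $\pa=\underline{\pa}+e_m\pa_{x_m}$ acts across the summands $F^{(k-j)}_{m,j}H_j(\bR^{m-1},\cliff_m)$. The point is that $\pa$ mixes adjacent levels $j$ and $j\pm 1$ in a way governed entirely by the one-variable identities for Gegenbauer polynomials hidden in \eqref{EFHarm}, so the kernel of $\pa$ inside $\bigoplus_j F^{(k-j)}_{m,j}H_j(\bR^{m-1},\cliff_m)$ is a ``graph'' subspace: for each $j$, a copy of $\cM_j(\bR^{m-1},\cliff_m)$ (the spherical monogenics in the hyperplane, which themselves further decompose $H_j(\bR^{m-1},\cliff_m)=\cM_j(\bR^{m-1},\cliff_m)\oplus \underline{x}\,\cM_{j-1}(\bR^{m-1},\cliff_m)$) embeds via a specific linear combination of two consecutive harmonic embedding factors. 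Matching the coefficient in that combination against the Dirac-annihilation condition should reproduce exactly the factor $X^{(k-j)}_{m,j}=F^{(k-j)}_{m,j}+\frac{j+1}{m+2j-1}F^{(k-j-1)}_{m,j+1}\,\underline{x}e_m$, with the constant $\frac{j+1}{m+2j-1}$ emerging from the recurrence relation for the $C^{\nu}_{k}$. A dimension count (the left side has $\dim\cM_k(\bR^m,\cliff_m)=\sum_{j=0}^k\dim\cM_j(\bR^{m-1},\cliff_m)$, which is standard) then certifies that these embedded copies span everything and the sum is direct.

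Orthogonality with respect to the $L^2$-inner product on $\bB_m$ is the easy part: the summands $X^{(k-j)}_{m,j}\cM_j(\bR^{m-1},\cliff_m)$ are precisely the isotypic components of the restriction of the irreducible $Pin(m)$-module $\cM_k(\bR^m,\cliff_m)$ to $Pin(m-1)$ — indeed $\cM_j(\bR^{m-1},\cliff_m)$ as a $Pin(m-1)$-module has highest weight determined by $j$ (and the spinor content), and distinct $j$ give inequivalent $Pin(m-1)$-types — so the decomposition is forced to be orthogonal for any invariant inner product, in particular for \eqref{L2product}. This replaces the computational verification of orthogonality by a representation-theoretic argument, in the same spirit as Theorem \ref{tGMT}.

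The main obstacle I expect is bookkeeping the normalization constants. The harmonic embedding factors \eqref{EFHarm} carry Pochhammer prefactors $(j+1)_{k-j}/(m-2+2j)_{k-j}$, and when $\pa$ acts it shifts $k\mapsto k-1$ and $j\mapsto j\pm1$ simultaneously, so one must track how these prefactors transform and check that the two contributions to $\pa\big(X^{(k-j)}_{m,j}\,M_j\big)$ — the ``harmonic part'' killed because $M_j$ is $(m-1)$-monogenic, and the cross terms from $\underline{\pa}(\underline{x})$, $\pa_{x_m}$, and the Gegenbauer derivative — cancel exactly. This is a finite, explicit calculation driven by two standard contiguous relations for Gegenbauer polynomials ($\frac{d}{dz}C^\nu_k=2\nu C^{\nu+1}_{k-1}$ and the three-term recurrence), but getting every index and coefficient right is where the real work lies; everything else is structural.
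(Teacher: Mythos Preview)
The paper does not supply its own proof of this theorem; it simply cites \cite[Theorem 2.2.3, p.\ 315]{DSS}, where the embedding factors $X^{(k-j)}_{m,j}$ are obtained via the Cauchy--Kovalevskaya extension of $(m-1)$-dimensional spherical monogenics into $\cM_k(\bR^m,\cliff_m)$. Your strategy --- start from the harmonic branching \eqref{branchHarm}, split each $H_j(\bR^{m-1},\cliff_m)$ by the Fischer decomposition $\cM_j\oplus\pod x\,\cM_{j-1}$, and then isolate the kernel of $\pa=\pod\pa+e_m\pa_{x_m}$ inside the resulting double sum by matching Gegenbauer contiguous relations --- is a legitimate alternative route and would indeed reproduce \eqref{EF_CA}. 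It is essentially the inverse computation to the CK method: CK starts from $\cM_j(\bR^{m-1},\cliff_m)$ and propagates in $x_m$, whereas you start from a generic $m$-dimensional harmonic and impose $\pa=0$ level by level. The dimension count you invoke is standard, and the bookkeeping with the Pochhammer factors and the identities $\frac{d}{dz}C^\nu_k=2\nu C^{\nu+1}_{k-1}$ plus the three-term recurrence is exactly the substance of either argument.

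One point does need correction. You call $\cM_k(\bR^m,\cliff_m)$ an irreducible $Pin(m)$-module; it is not. Under the $L$-action it is a direct sum of several copies of the basic spinor-valued module, and under the $H$-action it decomposes as in Theorem~\ref{tGMT}. Your orthogonality conclusion nevertheless survives: for distinct $j$ the $Pin(m-1)$-modules $\cM_j(\bR^{m-1},\cliff_m)$ share no irreducible constituent (each constituent carries $j$ in its highest weight), so Schur's lemma still forces the summands $X^{(k-j)}_{m,j}\cM_j(\bR^{m-1},\cliff_m)$ to be mutually orthogonal for any invariant inner product. Simply drop the word ``irreducible'' and phrase the argument in terms of disjoint $Pin(m-1)$-isotypic content.
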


In \cite{lavCAOT}, it is shown that, for spherical harmonics and monogenics in any dimension, Gelfand-Tsetlin bases even form complete orthogonal Appell systems. Moreover, this is not a~coincidence but the consequence of the construction of the Gelfand-Tsetlin bases and the fact 
that $\pa_{x_m}$ is obviously an invariant operator under the action of the subgroup $Pin(m-1)$. 
In the next section, we prove analogous results for Hodge-de Rham systems.


\section{Gelfand-Tsetlin bases for Hodge-de Rham systems}\label{sGT_HdR}

In this section, we  construct Gelfand-Tsetlin bases for the spaces $\cH^s_k(\bR^m)$ of $k$-homogeneous monogenic polynomials $P:\bR^m\to\cliff_m^s$. 
Here $\cliff_m^s$ stands for the space of $s$-vectors in $\cliff_m.$

\begin{rem}
Obviously, we have that $\cH^s_k(\bR^m)=\{0\}$ for $s\in\{0,m\}$ and $k\geq 1.$ 
In the case when $\cliff_m=\bR_{0,m}$ (resp.\ $\bC_m$), we have that
$\cH^0_0(\bR^m)=\bR$ (resp.\ $\bC$) and $\cH^m_0(\bR^m)=\bR\, e^*_M$ (resp.\ $\bC\, e^*_M$) with $e^*_M=e_me_{m-1}\cdots e_1.$ 
\end{rem}

As we know,  the space $\cH^s_k(\bR^m)$ forms an irreducible module 
under the $H$-action of the Pin group $Pin(m)$.
Moreover, all non-trivial modules $\cH^s_k(\bR^m)$ are mutually inequivalent, see \cite{DLS}.
The key step for constructing the Gelfand-Tsetlin bases is to understand the branching of the module $\cH^s_k(\bR^m)$. 
 
\begin{thm}\label{tbranch_HdR} 
Let $m\geq 3$, $s=0,\ldots,m$ and $k\in\bN_0$.
Denote by $N^{s,m}_k$ the set of pairs $(t,j)\in \{0,\ldots, m-1\}\times\{0,\ldots,k\}$ such that 
$t\in\{s-1,s\}$ and, if $t\in\{0,m-1\}$ then $j=0$.
Then, under the $H$-action of $Pin(m-1)$, we have the following multiplicity free irreducible decomposition
\begin{equation}\label{branch_HdR}
\cH^s_k(\bR^m)=\bigoplus_{(t,j)\in N^{s,m}_k}\;X^{s,t,m}_{k,j}\;\cH^t_j(\bR^{m-1}).
\end{equation}
Here the embedding factors $X^{s,t,m}_{k,j}$ are defined as the polynomials
$$
X^{s,t,m}_{k,j}(x)=X^{(k-j)}_{m,j}(x)e_m^{s-t}+\alpha\;X^{(k-1-j)}_{m,j+1}(x)(\beta^{t-s}(\pod x\;\wedge)+\beta^{t-s+1}\;(\pod x\;\bullet))\;e_m^{s-t+1}
$$ 
where $x=(\pod x,x_m)\in\bR^m$, $X^{(k-j)}_{m,j}$ are given in \eqref{EF_CA}, $X^{(-1)}_{m,k+1}=0$,\\ $\beta=-(j+m-1-t)/(j+t)$ and
$$
\alpha=
\left\{
\begin{array}{ll}
-(j+1)/(m+2j-1)& \text{unless\ \ \ }t=0,m-1;\medskip\\
0& \text{if\ \ \ }t=0,m-1.
\end{array}
\right.
$$
In particular, this decomposition is orthogonal with respect to any invariant inner product, including the $L^2$-inner product \eqref{L2product}.

\end{thm}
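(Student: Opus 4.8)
The plan is to deduce the branching for the Hodge--de Rham system from the branching for spherical monogenics in Theorem~\ref{tbranch_CA}, rather than from the Cauchy--Kovalevskaya method used in \cite{GTHdR}. Since $\cH^s_k(\bR^m)\subset\cM_k(\bR^m,\clifford_m)$, each $P\in\cH^s_k(\bR^m)$ can be written, by Theorem~\ref{tbranch_CA}, as $P=\sum_{j=0}^k X^{(k-j)}_{m,j}P_j$ with $P_j\in\cM_j(\bR^{m-1},\clifford_m)$. The first step is to figure out which values the $s$-vector part of $x$ forces on the $P_j$. Writing $x=(\pod x,x_m)$ and using $e_m^2=-1$, one checks that the embedding factor $X^{(k-j)}_{m,j}$ splits into a piece that preserves the $\clifford_{m-1}$-grading and a piece that carries $\pod x\,e_m$, which changes grade by $0$ or $2$ after reinterpreting $\pod x$ as a $1$-vector and $e_m$ as the extra unit vector. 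So one organizes the sum by the $\clifford_{m-1}$-homogeneous components of the $P_j$, relabelling them as $s$-vectors or $(s-1)$-vectors in $\clifford_{m-1}$ times a fixed power $e_m^{s-t}$; this is where the constraint $t\in\{s-1,s\}$ appears.

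The second step is to impose the two equations $\pa^+P=0$ and $\pa^-P=0$ (equivalently $\pa P=0$ together with $P$ being $s$-vector valued) and read off the constraints on the $\clifford_{m-1}$-components $P_j$. Here I would use that $\pa_{x_m}$ commutes with the $Pin(m-1)$-action, so the whole computation is $Pin(m-1)$-equivariant, and that $\pa=\pa^++\pa^-$ together with the analogous splitting of $\pod\partial$ on $\bR^{m-1}$. The key computational lemma will be how $\pa^\pm$ acts on a product $X^{(k-j)}_{m,j}Q$ with $Q$ a $\clifford_{m-1}$-valued spherical monogenic: differentiating the explicit Gegenbauer-type factor $X^{(k-j)}_{m,j}$ in \eqref{EF_CA} produces combinations of neighbouring embedding factors with scalar coefficients involving the parameters $(j+1)/(m+2j-1)$, and one must keep track of how the wedge/dot decomposition of $\pod\partial$ interacts with the wedge/dot decomposition of the $\pod x$ appearing in $X^{(k-j)}_{m,j}$ and in the new factor $(\pod x\wed)+\beta(\pod x\bul)$. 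Requiring the resulting expression to vanish should pin down each $\clifford_{m-1}$-component of $P_j$ to lie in $\cH^t_j(\bR^{m-1})$ for the admissible $t$, and should fix the relative coefficient $\alpha$ and the Moisil--Th\'eodoresco-type coefficient $\beta$ exactly as stated; the coefficient $\beta^{t-s}(\pod x\wed)+\beta^{t-s+1}(\pod x\bul)$ is precisely the embedding operator appearing in Theorem~\ref{tGMT} for the $(m-1)$-dimensional situation, so I expect to be able to quote that structure rather than rediscover it. The degenerate cases $t=0$ and $t=m-1$, where $\cH^t_j(\bR^{m-1})$ is nonzero only for $j=0$ and the second term must drop out (hence $\alpha=0$), are handled separately but are easy once the generic case is in place.

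The third step is to verify that the decomposition \eqref{branch_HdR} is exhaustive and multiplicity free. For this I would compare dimensions (or, cleanly, characters/weights): the right-hand side is manifestly a sum of $Pin(m-1)$-submodules, each $X^{s,t,m}_{k,j}\cH^t_j(\bR^{m-1})$ is irreducible because the embedding operator is injective (its leading Gegenbauer coefficient is nonzero) and $Pin(m-1)$-equivariant, and the modules $\cH^t_j(\bR^{m-1})$ for distinct $(t,j)$ are pairwise inequivalent by \cite{DLS}, so no multiplicities can occur; then one checks that the total dimension matches $\dim\cH^s_k(\bR^m)$, which follows from the known branching in \cite{GTHdR} or can be obtained from Theorem~\ref{tGMT} applied with $S=\{s\}$. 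Orthogonality with respect to any invariant inner product is then automatic: inequivalent irreducible submodules are orthogonal, and the $L^2$-inner product \eqref{L2product} is invariant.

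The main obstacle I anticipate is the bookkeeping in the second step: one is simultaneously tracking a grading shift (the powers $e_m^{s-t}$ and $e_m^{s-t+1}$), two different wedge/dot splittings (of $\pod\partial$ and of the polynomial factors), and the recursion relations among the $X^{(k-j)}_{m,j}$ inherited from the Gegenbauer recurrences, all of which must conspire to produce the precise constants $\alpha$ and $\beta$. A clean way to control this is to first establish, as a standalone lemma, the action of $\pa^+$ and $\pa^-$ on $X^{s,t,m}_{k,j}\,Q$ for $Q\in\cH^t_j(\bR^{m-1})$ in terms of the same family of embedding factors in lower degree, thereby reducing the theorem to a finite system of scalar identities in the parameters $m,k,s,t,j$ that can be checked directly from \eqref{EF_CA}--\eqref{gegenbauer}.
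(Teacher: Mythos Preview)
Your overall route---deduce the Hodge--de Rham branching from Theorem~\ref{tbranch_CA} rather than from Cauchy--Kovalevskaya---is exactly the paper's, but your second step works harder than necessary. The paper first refines the branching of $\cM_k(\bR^m,\cliff_m)$ from Theorem~\ref{tbranch_CA} by splitting $\cliff_m=\cliff_{m-1}\oplus e_m\cliff_{m-1}$ and then applying Theorem~\ref{tGMT} in dimension $m-1$ to each $\cM_j(\bR^{m-1},\cliff_{m-1})$; this produces an explicit $Pin(m-1)$-irreducible decomposition of the whole of $\cM_k(\bR^m,\cliff_m)$ into pieces of the form $X^{(k-j)}_{m,j}e_m^r\,\cH^t_j(\bR^{m-1})$ and $X^{(k-j)}_{m,j}((\pod x\wed)+\beta^{t,m-1}_{j-1}(\pod x\bul))e_m^r\,\cH^t_{j-1}(\bR^{m-1})$. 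Since every such piece is already monogenic, the only condition left to impose is that the combination $X^{s,t,m}_{k,j}\cH^t_j(\bR^{m-1})$ be $s$-vector valued, which is a purely algebraic constraint (read off from the explicit form \eqref{EF_CA} of $X^{(k-j)}_{m,j}$) and fixes $\alpha$ immediately; the constant $\beta$ is not derived at all but comes packaged in the $(m-1)$-dimensional instance of Theorem~\ref{tGMT}. Your proposal to pin down $\alpha$ and $\beta$ by computing $\pa^\pm(X^{(k-j)}_{m,j}Q)$ via Gegenbauer recurrences is correct but redundant: monogenicity is automatic, so the differential bookkeeping you flag as the main obstacle can be avoided entirely. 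Likewise, the paper bypasses your dimension/character count by invoking the abstract branching rule for $Pin(m)\downarrow Pin(m-1)$ at the outset, so that one knows in advance exactly which $\cH^t_j(\bR^{m-1})$ must occur and with multiplicity one.
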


\begin{proof} In \cite{GTHdR}, it is shown by the Cauchy-Kovalevskaya method. 
Now we give an alternative proof. It is a~well-known fact from representation theory that, under the $H$-action of $Pin(m-1)$,
the module $\cH^s_k(\bR^m)$ possesses the decomposition 
\begin{equation*}
\cH^s_k(\bR^m)=\bigoplus_{(t,j)\in N^{s,m}_k}\tilde\cH^t_j
\end{equation*}
into irreducible submodules $\tilde\cH^t_j$ equivalent to $\cH^t_j(\bR^{m-1})$. 
To get \eqref{branch_HdR} we need to describe explicitly the pieces $\tilde\cH^t_j$ in the decomposition.
To do this, we recall that, 
by Theorem \ref{tbranch_CA}, we have that
\begin{equation}\label{branch_CA2}
\cM_k(\bR^m,\cliff_m)=
\bigoplus_{j=0}^k X^{(k-j)}_{m,j}\cM_{j}(\bR^{m-1},\cliff_m).
\end{equation}
As $\cliff_m=\cliff_{m-1}\oplus e_m\;\cliff_{m-1}$ each space $\cM_{j}(\bR^{m-1},\cliff_m)$ in \eqref{branch_CA2} decomposes further as
$\cM_{j}(\bR^{m-1},\cliff_m)=\cM_{j}(\bR^{m-1},\cliff_{m-1})\oplus e_m\cM_{j}(\bR^{m-1},\cliff_{m-1})$. 
Moreover, by Theorem \ref{tGMT}, any space $\cM_{j}(\bR^{m-1},\cliff_{m-1})$ decomposes into $Pin(m-1)$-irreducible submodules as
$\cM_j(\bR^{m-1},\cliff_{m-1})=$ 
\begin{equation*}
\left(\bigoplus_{t=0}^{m-1} \cH^t_j(\bR^{m-1})\right)\oplus
\left(\bigoplus_{t=1}^{m-2}((\pod x\wed)+\beta^{t,m-1}_{j-1}(\pod x\bul))\,\cH^t_{j-1}(\bR^{m-1})\right).
\end{equation*}
As a~result, we have decomposed the space $\cM_k(\bR^m,\cliff_m)$ into $Pin(m-1)$-irreducible pieces
\begin{equation}\label{pieces}
X^{(k-j)}_{m,j} e_m^r\; \cH^t_j(\bR^{m-1})\text{\ and\ }X^{(k-j)}_{m,j}((\pod x\wed)+\beta^{t,m-1}_{j-1}(\pod x\bul)) e_m^r\; \cH^t_{j-1}(\bR^{m-1})
\end{equation}
where $r=0,1$ and, resp., $t=0,\ldots,m-1$, $j=0,\ldots,k$ and $t=1,\ldots,m-2$, $j=1,\ldots,k$. 
Now it is easy to find the submodule $\tilde\cH^t_j$ equivalent to $\cH^t_j(\bR^{m-1})$ inside $\cH^s_k(\bR^m)$.
Indeed, by \eqref{pieces}, it is sufficient to choose a~constant $\tilde\alpha$ such that, for a~$Pin(m-1)$-invariant
$$
X^{s,t,m}_{k,j}(x)=X^{(k-j)}_{m,j}(x)e_m^{s-t}+\tilde\alpha\;X^{(k-1-j)}_{m,j+1}(x)((\pod x\;\wedge)+\beta^{t,m-1}_{j}\;(\pod x\;\bullet))\;e_m^{s-t+1},
$$ 
we have that $X^{s,t,m}_{k,j}\;\cH^t_j(\bR^{m-1})\subset\cH^s_k(\bR^m)$. 
As we know that $$X^{s,t,m}_{k,j}\;\cH^t_j(\bR^{m-1})\subset\cM_k(\bR^m,\cliff_m)$$ it is sufficient to take the constant $\tilde\alpha$ such that
the piece $X^{s,t,m}_{k,j}\;\cH^t_j(\bR^{m-1})$ contains only $s$-vector valued polynomials. But, recalling the definition \eqref{EF_CA} of the factors $X^{(k-j)}_{m,j}$,  this is not difficult to do. 
\end{proof}

Using Theorem \ref{tbranch_HdR}, we easily construct orthogonal (or even Gelfand-Tsetlin) bases for the spaces $\cH^s_k(\bR^m)$ by induction on the dimension $m$.

\begin{ex}\label{GT_HdR_2} 
First we construct orthogonal bases for Hodge-de Rham systems in dimension 2. 
Indeed, the following statements are obvious.

\medskip\noindent
(i) For $s\in\{0,2\}$, an orthogonal basis for $\cH^s_0(\bR^2)$ is formed by the unique element $f^s_0=e^{s,0}$ with $e^{0,0}=1$ and  $e^{2,2}=e_{21}$.

\medskip\noindent
(ii) Let $\cliff_2=\bR_{0,2}$. Then,
for $k\in\bN_0$, an orthogonal basis for $\cH^1_k(\bR^2)$ consists of two polynomials $f^{\pm 1}_k(x)=(x_1-e_{12}x_2)^k e^{1,\pm 1}$ with 
$e^{1,1}=e_1$ and $e^{1,-1}=e_2$.

\medskip\noindent
(iii) Let $\cliff_2=\bC_2$. Then,
for $k\in\bN_0$, an orthogonal basis for $\cH^1_k(\bR^2)$ consists of two polynomials $f^{\pm 1}_k(x)=(x_1\mp ix_2)^k e^{1,\pm 1}$ with 
$e^{1,\pm 1}=e_1\mp i e_2$.

\medskip\noindent
(iv) Otherwise, the spaces $\cH^s_k(\bR^2)$ are trivial.
\end{ex}

\begin{thm}\label{GT_HdR} 
Let $m\geq 3$. 
Denote by $I^{s,m}_k$ the set of pairs $(\nu,\mu)$ such that the sequences 
$\nu=(s_{m-1},\ldots,s_3,t_2)$ and $\mu=(k_{m-1},k_{m-1},\ldots,k_2)$
of integers satisfy
$(t_2,k_2)\in\{(0,0),(2,0)\}\cup\{(\pm1,k)|\ k\in\bN_0\}$ and, 
for each $r=3,\ldots,m$, $(s_{r-1},k_{r-1})\in N^{s_r,r}_{k_r}$  
where $k_m=k$, $s_m=s$ and $s_2=|t_2|$.

\medskip\noindent
Then an orthogonal (or even Gelfand-Tsetlin) basis of $\cH^s_k(\bR^m)$ is formed by the polynomials
$$
f^{s,\nu}_{k,\mu}=X^{s,s_{m-1},m}_{k,k_{m-1}} X^{s_{m-1},s_{m-2},m-1}_{k_{m-1},k_{m-2}}\cdots X^{s_3,s_2,3}_{k_3,k_2}\;f^{t_2}_{k_2},
\ (\nu,\mu)\in I^{s,m}_k.
$$
Here the embedding factors $X^{s,t,m}_{k,j}$ are given in Theorem \ref{tbranch_HdR} and $f^{t_2}_{k_2}$ in Example~\ref{GT_HdR_2}.
\end{thm}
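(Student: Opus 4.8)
The plan is to induct on the dimension $m$, using the branching of Theorem \ref{tbranch_HdR} as the inductive step and Example \ref{GT_HdR_2} as the base case $m=2$.

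First I would record that each embedding map $P\mapsto X^{s,t,m}_{k,j}\,P$ occurring in Theorem \ref{tbranch_HdR} is injective; this is automatic because the right-hand side of the branching formula is an internal direct sum inside $\cH^s_k(\bR^m)$, so the $(t,j)$-summand $X^{s,t,m}_{k,j}\,\cH^t_j(\bR^{m-1})$ is a faithful copy of $\cH^t_j(\bR^{m-1})$, and the embedding map carries any basis of $\cH^t_j(\bR^{m-1})$ to a basis of the summand. Granting the induction hypothesis in dimension $m-1$ — that $\cH^t_j(\bR^{m-1})$ has the stated orthogonal basis $\{f^{t,\nu'}_{j,\mu'}\}_{(\nu',\mu')\in I^{t,m-1}_j}$, with the case $m-1=2$ supplied by Example \ref{GT_HdR_2} — the set $\{X^{s,t,m}_{k,j}\,f^{t,\nu'}_{j,\mu'}\}$ is a basis of the $(t,j)$-summand, and the union over all $(t,j)\in N^{s,m}_k$ is a basis of $\cH^s_k(\bR^m)$. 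Unwinding this recursion and reading off the labels $(s_r,k_r):=(t,j)$ of the successive branchings is exactly what produces the index set $I^{s,m}_k$ and the product expression for $f^{s,\nu}_{k,\mu}$ in the statement.

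Next I would verify orthogonality. For two basis elements lying in different summands of \eqref{branch_HdR} it is immediate from the orthogonality clause of Theorem \ref{tbranch_HdR}. For two lying in the same summand $X^{s,t,m}_{k,j}\,\cH^t_j(\bR^{m-1})$ I would use a Schur-type argument: the $L^2$-inner product \eqref{L2product} on $\bB_m$ is invariant under the $H$-action of $Pin(m)$, hence under $Pin(m-1)$, so the $Pin(m-1)$-equivariant isomorphism $P\mapsto X^{s,t,m}_{k,j}\,P$ pulls it back to a $Pin(m-1)$-invariant inner product on the irreducible module $\cH^t_j(\bR^{m-1})$; by Schur's lemma this equals a positive scalar times the $L^2(\bB_{m-1})$-inner product, so the embedding map is, up to that scalar, an isometry and carries the orthogonal basis $\{f^{t,\nu'}_{j,\mu'}\}$ to an orthogonal set. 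Combining the two cases gives orthogonality of $\{f^{s,\nu}_{k,\mu}\}_{(\nu,\mu)\in I^{s,m}_k}$. That the basis so produced is moreover a Gelfand-Tsetlin basis is then automatic: the chain $Pin(m)\supset Pin(m-1)\supset\cdots\supset Pin(2)$ together with the multiplicity-free decompositions of Theorem \ref{tbranch_HdR} at each level is precisely the data defining such a basis, and $I^{s,m}_k$ is exactly the set of admissible chains of $Pin(r)$-labels.

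The only step that is not pure bookkeeping is the Schur-lemma argument ensuring intra-summand orthogonality — equivalently, the fact that each embedding factor acts as a positive multiple of an isometry between the relevant $L^2$-spaces; once that is in hand, the theorem follows by a routine induction. I would also dispose at the outset of the degenerate cases $s\in\{0,m\}$: for $k\geq1$ the space $\cH^s_k(\bR^m)$ vanishes, and for $k=0$ it is one-dimensional, consistent with the constraint in $N^{s,m}_k$ (and hence in $I^{s,m}_k$) forcing $j=0$ whenever $t\in\{0,m-1\}$.
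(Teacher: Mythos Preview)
Your proposal is correct and follows precisely the approach the paper indicates: the paper gives no formal proof of this theorem, remarking only that ``Using Theorem~\ref{tbranch_HdR}, we easily construct orthogonal (or even Gelfand-Tsetlin) bases for the spaces $\cH^s_k(\bR^m)$ by induction on the dimension $m$,'' with the base case $m=2$ supplied by Example~\ref{GT_HdR_2}. Your write-up simply fleshes this out, and the Schur-lemma step you highlight for intra-summand orthogonality is exactly the mechanism implicit in the paper's earlier remark (Section~\ref{sBranch}) that a multiplicity-free branching is orthogonal with respect to \emph{any} invariant inner product.
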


The basis elements $f^{s,\nu}_{k,\mu}$ have the following Appell property.

\begin{thm}\label{tAP_HdR}
Let $m\geq 3$ and let $f^{s,\nu}_{k,\mu}$ be the basis elements of the spaces $\cH^s_k(\bR^m)$ given in Theorem \ref{GT_HdR} 
with $\nu=(s_{m-1},\ldots,s_3,t_2)$, $\mu=(k_{m-1},k_{m-1},\ldots,k_2)$ and $s_2=|t_2|$.
Then we have that

\begin{itemize}
\item[(i)] $\pa_{x_m}  f^{s,\nu}_{k,\mu}=0$ for $k=k_{m-1}$;

\item[(ii)] $\pa_{x_m}  f^{s,\nu}_{k,\mu}=k\;  f^{s,\nu}_{k-1,\mu}$ for $k>k_{m-1}$;

\item[(iii)] $\pa_{t_2}^{k_2}\;\pa^{k_3-k_2}_{x_3}\cdots\pa^{k-k_{m-1}}_{x_m}\;  f^{s,\nu}_{k,\mu}=k!\;e^{s,\nu}$\\
with 
$e^{s,\nu}=e_m^{s-s_{m-1}}\cdots e_3^{s_3-s_2} e^{s_2, t_2}$ and
\begin{equation}\label{derHdR2}
\pa_{t_2}=
\left\{
\begin{array}{ll}
(1/2)(\pa_{x_1}+e_{12}\pa_{x_2})& \text{if\ \ \ }\cliff_m=\bR_{0,m}\text{\ and\ }t_2=\pm 1;\medskip\\
(1/2)(\pa_{x_1}\pm i\pa_{x_2})& \text{if\ \ \ }\cliff_m=\bC_m\text{\ and\ }t_2=\pm 1.
\end{array}
\right.
\end{equation}
Note that $k_2=0$ unless $t_2=\pm 1$.
\end{itemize}

\end{thm}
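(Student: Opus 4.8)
The plan is to prove the three statements of Theorem \ref{tAP_HdR} by the same strategy used in \cite{lavCAOT} for spherical harmonics and monogenics, namely by exploiting the fact that $\pa_{x_m}$ is an invariant operator under the $H$-action of the subgroup $Pin(m-1)$ together with the explicit structure of the embedding factors $X^{s,t,m}_{k,j}$. The crucial reduction is this: in the product $f^{s,\nu}_{k,\mu}=X^{s,s_{m-1},m}_{k,k_{m-1}}\,g$, where $g=X^{s_{m-1},s_{m-2},m-1}_{k_{m-1},k_{m-2}}\cdots f^{t_2}_{k_2}$ is a polynomial in $\pod x=(x_1,\ldots,x_{m-1})$ only, the operator $\pa_{x_m}$ acts only on the leading factor $X^{s,s_{m-1},m}_{k,k_{m-1}}$. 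Hence (i) and (ii) will follow once I establish the single-factor identity
\begin{equation*}
\pa_{x_m} X^{s,t,m}_{k,j}=
\begin{cases}
0 & \text{if } k=j,\\
k\; X^{s,t,m}_{k-1,j} & \text{if } k>j.
\end{cases}
\end{equation*}

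First I would reduce this identity to the corresponding one for the scalar embedding factors $F^{(k-j)}_{m,j}$ and then for $X^{(k-j)}_{m,j}$. Recalling \eqref{EFHarm}, $F^{(k-j)}_{m,j}$ is, up to the normalizing constant $(j+1)_{k-j}/(m-2+2j)_{k-j}$, the solid Gegenbauer harmonic $|x|^{k-j}C^{m/2+j-1}_{k-j}(x_m/|x|)$; it is classical (and can be checked directly from \eqref{gegenbauer}, or via the recurrence $\frac{d}{dz}C^\nu_n(z)=2\nu C^{\nu+1}_{n-1}(z)$ for Gegenbauer polynomials) that $\pa_{x_m}F^{(k-j)}_{m,j}$ is a scalar multiple of $F^{(k-1-j)}_{m,j}$, and the precise constant is forced to be $k$ by homogeneity bookkeeping and the chosen normalization — indeed $F^{(0)}_{m,j}=1$, so $\pa_{x_m}F^{(j+1-j)}_{m,j}=\pa_{x_m}F^{(1)}_{m,j}$ must equal the appropriate multiple, and one propagates. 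Once $\pa_{x_m}F^{(k-j)}_{m,j}=k\,F^{(k-1-j)}_{m,j}$ (with the convention $F^{(-1)}_{m,k+1}=0$ making $k=j$ give $0$) is in hand, formula \eqref{EF_CA} gives $\pa_{x_m}X^{(k-j)}_{m,j}=k\,X^{(k-1-j)}_{m,j}$ by linearity since $\pod x e_m$ does not involve $x_m$, and then the defining formula for $X^{s,t,m}_{k,j}$ in Theorem \ref{tbranch_HdR} gives the factor identity, again because the operators $(\pod x\wed)$, $(\pod x\bul)$, $e_m^{s-t}$, $e_m^{s-t+1}$, and the constants $\alpha,\beta$ are all independent of $x_m$. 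Statements (i) and (ii) then follow by iterating $\pa_{x_m}$ is not needed — a single application suffices, and the vanishing case $k=k_{m-1}$ is exactly the $k=j$ case of the factor identity.

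For (iii) I would argue by induction on $m$, or equivalently by peeling off one variable at a time. Apply first $\pa_{x_m}^{k-k_{m-1}}$: by (ii), iterated $k-k_{m-1}$ times, this sends $f^{s,\nu}_{k,\mu}$ to $\frac{k!}{k_{m-1}!}f^{s,\nu}_{k_{m-1},\mu}$, and $f^{s,\nu}_{k_{m-1},\mu}=X^{s,s_{m-1},m}_{k_{m-1},k_{m-1}}\,g$. Now the key sub-claim is that the lowest embedding factor $X^{s,t,m}_{k,k}$ evaluated at $x_m=0$ (or more precisely, what survives after we have killed $x_m$) reduces to the pure Clifford constant $e_m^{s-t}$: from \eqref{EFHarm} we have $F^{(0)}_{m,j}=1$ and the second term in \eqref{EF_CA} carries the factor $F^{(-1)}_{m,j+1}=0$ when $k=j$, so $X^{(0)}_{m,j}=1$, and likewise the $\alpha$-term in $X^{s,t,m}_{k,k}$ carries $X^{(-1)}_{m,k+1}=0$; hence $X^{s,s_{m-1},m}_{k_{m-1},k_{m-1}}=e_m^{s-s_{m-1}}$ exactly. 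Therefore after applying $\pa_{x_m}^{k-k_{m-1}}$ the result is $\frac{k!}{k_{m-1}!}\,e_m^{s-s_{m-1}}\,g$, with $e_m^{s-s_{m-1}}$ a left constant that commutes past all subsequent lower-dimensional differential operators. Repeating this with $\pa_{x_{m-1}}^{k_{m-1}-k_{m-2}}$ acting on $g$, and so on down to the base case — where by Example \ref{GT_HdR_2} and a direct check $\pa_{t_2}^{k_2}f^{t_2}_{k_2}=k_2!\,e^{s_2,t_2}$, since $\pa_{t_2}(x_1\mp ix_2)=1$ in the complex case and $\pa_{t_2}(x_1-e_{12}x_2)=1$ in the real case — assembles the product $e_m^{s-s_{m-1}}\cdots e_3^{s_3-s_2}e^{s_2,t_2}=e^{s,\nu}$ and the product of factorials telescopes to $k!$.

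The main obstacle is pinning down the exact normalizing constants so that the multiple in the Appell identity is precisely $k$ rather than some dimension- and degree-dependent scalar; this is where the specific choice of normalization $(j+1)_{k-j}/(m-2+2j)_{k-j}$ in \eqref{EFHarm} is doing all the work, and verifying it cleanly requires either the Gegenbauer derivative identity $\frac{d}{dz}C^\nu_n=2\nu C^{\nu+1}_{n-1}$ combined with careful Pochhammer arithmetic, or a slicker argument observing that $\pa_{x_m}$ maps the branching decomposition \eqref{branch_HdR} of $\cH^s_k$ to that of $\cH^s_{k-1}$ equivariantly, hence must act as a scalar on each isotypic line $X^{s,t,m}_{k,j}\cH^t_j(\bR^{m-1})\to X^{s,t,m}_{k-1,j}\cH^t_j(\bR^{m-1})$ by Schur, and then one only needs to compute that scalar on a single convenient vector. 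A secondary bookkeeping point is checking that $e_m^{s-t}$ and the products $(\pod x\wed)$, $(\pod x\bul)$ genuinely commute with $\pa_{x_m}$ and with the later operators $\pa_{x_r}$ — true because they involve only $e_1,\ldots,e_{m-1}$ and $x_1,\ldots,x_{m-1}$ — but one should state it explicitly to justify moving all Clifford constants to the left.
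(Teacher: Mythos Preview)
Your proposal is correct and follows essentially the same route as the paper: reduce (i) and (ii) to the single-factor Appell identity $\pa_{x_m}X^{s,t,m}_{k,j}=k\,X^{s,t,m}_{k-1,j}$, which in turn is inherited from $\pa_{x_m}F^{(k-j)}_{m,j}=k\,F^{(k-1-j)}_{m,j}$ via the explicit formulas for $X^{(k-j)}_{m,j}$ and $X^{s,t,m}_{k,j}$, and then deduce (iii) by iterating and using $X^{s,t,m}_{k,k}=e_m^{s-t}$. The paper's own proof is just a terse two-sentence version of exactly this argument, citing \cite{lavCAOT} for the Gegenbauer computation; your additional remarks on the Schur-lemma alternative and on commuting the Clifford constants past the scalar operators $\pa_{x_r}$ (and past $e_{12}$ in $\pa_{t_2}$, which works since $e_r$ commutes with $e_{12}$ for $r\geq 3$) are helpful elaborations but not a different approach.
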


\begin{proof}
Using standard formulas for Gegenbauer polynomials, it is easy to verify that, for $k>j$, $\pa_{x_m} F^{(k-j)}_{m,j}=k\;F^{(k-1-j)}_{m,j}$
and $F^{(0)}_{m,k}=1$ and, consequently, we get that, for $k>j$, $\pa_{x_m} X^{(k-j)}_{m,j}=k\;X^{(k-j-1)}_{m,j}$ and $X^{(0)}_{m,k}=1$ (see \cite{lavCAOT}).
Hence we conclude that, for $k>j$,  $\pa_{x_m} X^{s,t,m}_{k,j}=k\;X^{s,t,m}_{k-1,j}$ and $X^{s,t,m}_{k,k}=e_m^{s-t}$, which completes easily the proof.
\end{proof}

\begin{rem}\label{resnu} 
Let $s\in\{0,\ldots,m\}$ and put $s_m=s$. Denote by $J^{s,m}$ the set of sequences 
$\nu=(s_{m-1},\ldots,s_3,t_2)$ of integers such that, for each $r=2,\ldots,m-1$, 
$$0\leq s_r\leq r\mand s_{r+1}-1\leq s_{r}\leq s_{r+1}$$ 
where $s_2=|t_2|$.
Obviously, the set $\{e^{s,\nu}|\ \nu\in J^{s,m}\}$ is a~basis of the space $\cliff_m^s$ of $s$-vectors.
Here $e^{s,\nu}$ are given in Theorem \ref{tAP_HdR}.
Then each $a\in\cliff_m^s$ can be uniquely  written as
$$a=\sum_{\nu\in J^{s,m}} a^{\nu} e^{s,\nu}$$ for some (real or complex) numbers $a^{\nu}$.
\end{rem}

To summarize, we have constructed a~complete orthogonal Appell system for the Hilbert space
$L^2(\bB_m,\cliff_m^s)\cap \Ker \pa$ of $L^2$-integrable monogenic functions $g:\bB_m\to\cliff_m^s$.
Indeed, using Theorems \ref{GT_HdR} and \ref{tAP_HdR}, we easily obtain the following result.

\begin{thm}\label{tTaylor_HdR} Let $m\geq 3$, let $\bB_m$ be the unit ball in $\bR^m$ and let $s=0,\ldots,m$. 

\begin{itemize}
\item[(a)] Then an orthogonal basis of the space $L^2(\bB_m,\cliff_m^s)\cap \Ker \pa$ is formed by the polynomials
$f^{s,\nu}_{k,\mu}$ for $k\in\bN_0$ and $(\nu,\mu)\in I^{s,m}_k$.
Here the basis elements $f^{s,\nu}_{k,\mu}$ are defined in Theorem \ref{GT_HdR}.

\item[(b)]
Each function $g\in L^2(\bB_m,\cliff_m^s)\cap \Ker \pa$ has a~unique orthogonal series expansion
\begin{equation}\label{taylor_HdR}
g = \sum_{k=0}^{\infty}\; \sum_{(\nu,\mu)\in I^{s,m}_k} \mathbf{t}^{s,\nu}_{k,\mu}(g)\; f^{s,\nu}_{k,\mu}
\end{equation}
for some complex coefficients $\mathbf{t}^{s,\nu}_{k,\mu}(g)$. 
In addition, by Remark \ref{resnu}, we have that $$g=\sum_{\nu\in J^{s,m}} g^{\nu} e^{s,\nu}$$ for some complex functions $g^{\nu}$ on $\bB_m$.
Then, for $(\nu,\mu)\in I^{s,m}_k$, it holds that
\begin{equation}\label{coeff_HdR}
\mathbf{t}^{s,\nu}_{k,\mu}(g)
=\frac{1}{k!}\;\pa_{t_2}^{k_2}\;\pa^{k_3-k_2}_{x_3}\cdots\pa^{k-k_{m-1}}_{x_m}\; g^{\nu}(x)|_{x=0}
\end{equation}
where $\pa_{t_2}$ is defined in \eqref{derHdR2}.
\end{itemize}
\end{thm}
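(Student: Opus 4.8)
The plan is to derive Theorem~\ref{tTaylor_HdR} as a more-or-less formal consequence of Theorems~\ref{GT_HdR} and \ref{tAP_HdR}, together with the density of $\bigoplus_{k=0}^\infty\cH^s_k(\bR^m)$ in $L^2(\bB_m,\cliff_m^s)\cap\Ker\pa$ recalled in the introduction. For part~(a), I would argue that by Theorem~\ref{GT_HdR} the polynomials $f^{s,\nu}_{k,\mu}$ with $(\nu,\mu)\in I^{s,m}_k$ form an orthogonal basis of each finite-dimensional space $\cH^s_k(\bR^m)$; since monogenic polynomials of different degrees of homogeneity are mutually orthogonal with respect to the $L^2$-inner product on the ball, the union over all $k\in\bN_0$ is an orthogonal system, and completeness follows from the density statement. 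This is essentially bookkeeping.

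For part~(b), the existence and uniqueness of the expansion \eqref{taylor_HdR} is just the abstract Hilbert-space fact that every element has a unique Fourier series with respect to a complete orthogonal system; the substantive content is the formula \eqref{coeff_HdR} for the coefficients in terms of derivatives at the origin. Here the strategy is to apply the differential operator $\cD:=\pa_{t_2}^{k_2}\pa_{x_3}^{k_3-k_2}\cdots\pa_{x_m}^{k-k_{m-1}}$ term by term to the series \eqref{taylor_HdR} and evaluate at $x=0$. By Theorem~\ref{tAP_HdR}(iii), $\cD f^{s,\nu}_{k,\mu}=k!\,e^{s,\nu}$; one then needs that $\cD f^{s,\nu'}_{k',\mu'}$ vanishes at the origin for every other index $(k',\nu',\mu')$ appearing in the sum, and that the remaining term contributes only the coefficient $\mathbf{t}^{s,\nu}_{k,\mu}(g)$ via the component $g^\nu$. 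Applying $\cD$ lowers homogeneity by exactly $k$, so terms with $k'<k$ are killed outright, terms with $k'>k$ are killed upon evaluation at $0$, and among terms with $k'=k$ one has to check that the derivative patterns of the Appell system separate the multi-indices $(\nu',\mu')$ — this is where one must use that applying $\cD$ to $f^{s,\nu'}_{k,\mu'}$ with $(\nu',\mu')\ne(\nu,\mu)$ but $k'=k$ produces either zero (when some differentiation order exceeds the available homogeneity in that variable, forced by $(s_{r-1},k_{r-1})\in N^{s_r,r}_{k_r}$ and the structure $X^{s,t,m}_{k,k}=e_m^{s-t}$ from the proof of Theorem~\ref{tAP_HdR}) or a polynomial vanishing at $0$. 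Finally, reading off the $e^{s,\nu}$-component via Remark~\ref{resnu} converts the identity $\cD g|_{x=0}=k!\,\mathbf{t}^{s,\nu}_{k,\mu}(g)\,e^{s,\nu}$ into \eqref{coeff_HdR}, since $\cD$ commutes with the fixed-basis decomposition $g=\sum_\nu g^\nu e^{s,\nu}$ (the $e^{s,\nu}$ are constants) and $\cD g^\nu(x)|_{x=0}$ picks out precisely the scalar coefficient.

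The point requiring the most care, and the one I expect to be the main obstacle, is justifying the term-by-term differentiation of the $L^2$-series \eqref{taylor_HdR}: an $L^2$-convergent series of monogenic functions need not converge in a way that permits differentiation. The standard remedy in this Clifford-analytic setting is to invoke that $L^2$-convergence of monogenic functions on $\bB_m$ implies locally uniform convergence on compact subsets of the open ball (a consequence of the mean-value / Cauchy-type estimates for monogenic functions, as in \cite{DSS}), hence also locally uniform convergence of all derivatives; this makes the interchange of $\cD$ with the infinite sum legitimate, and in particular legitimizes evaluation at $x=0$. Once this analytic point is in place, the rest is the combinatorial verification sketched above, which is routine given the explicit shape of the embedding factors and the Appell identities already established. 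I would therefore present part~(b) by first recording the locally-uniform-convergence remark, then performing the term-by-term computation, and finally appealing to Remark~\ref{resnu} to extract the scalar coefficient.
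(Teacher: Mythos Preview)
Your proposal is correct and follows exactly the route the paper indicates: the paper gives no proof at all, merely remarking that the result ``is easily obtained'' from Theorems~\ref{GT_HdR} and~\ref{tAP_HdR}, and your argument is precisely the natural unpacking of that remark. In fact you supply considerably more detail than the paper does, including the analytic justification (local uniform convergence of $L^2$-monogenic series) for differentiating term by term.
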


For a~function $g\in L^2(\bB_m,\cliff_m^s)\cap \Ker \pa$, we call the orthogonal series expansion \eqref{taylor_HdR} its generalized Taylor series.

Now we construct orthogonal bases for solutions of an arbitrary generalized Moisil-Th\'eodoresco  system. 
It is easy to see that, using Theorems \ref{tGMT} and \ref{GT_HdR}, we obtain the following result.

\begin{thm}\label{OG_GMT}
Let $S$ be a~subset of $\{0,1,\ldots,m\}$ and let $S'=\{s:s\pm 1\in S\}$. 
Then an orthogonal basis of the Hilbert space $L^2(\bB_m,\cliff_m^S)\cap \Ker \pa$ is formed by the polynomials
$$f^{s,\nu}_{k,\mu}\text{\ \ for\ }s\in S,\ k\in\bN_0\text{\ and\ }(\nu,\mu)\in I^{s,m}_k,$$
together with the polynomials
$$((x\wed)+\beta^{s,m}_{k-1}(x\bul))\,f^{s,\nu}_{k-1,\mu}\text{\ \ for\ }s\in S',\ k\in\bN\text{\ and\ }(\nu,\mu)\in I^{s,m}_{k-1}.$$
Here $\cliff_m^S=\bigoplus_{s\in S}\cliff_m^s$ and $\beta^{s,m}_{k}=-(k+m-s)/(k+s)$.
\end{thm}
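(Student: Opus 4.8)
The plan is to deduce Theorem \ref{OG_GMT} directly from Theorem \ref{tGMT} and Theorem \ref{GT_HdR}, reducing everything to the problem of assembling orthogonal bases of each summand in the decomposition of $\cM_k(\bR^m,\cliff_m^S)$. First I would recall that, by definition, a function $f\in L^2(\bB_m,\cliff_m^S)\cap\Ker\pa$ is exactly a $\cliff_m^S$-valued monogenic function, and that the orthogonal direct sum $\bigoplus_{k=0}^{\infty}\cM_k(\bR^m,\cliff_m^S)$ is dense in this Hilbert space with respect to the $L^2$-inner product \eqref{L2product}; this is the analogue, for $V=\cliff_m^S$, of the density statement quoted in the introduction. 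Hence it suffices to produce, for each fixed $k$, an orthogonal basis of the finite-dimensional space $\cM_k(\bR^m,\cliff_m^S)$, and then take the union over all $k\in\bN_0$, noting that distinct homogeneity degrees are automatically $L^2$-orthogonal.

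Next, for fixed $k$ I would invoke Theorem \ref{tGMT}, which gives the orthogonal decomposition
$$
\cM_k(\bR^m,\cliff_m^S)=\left(\bigoplus_{s\in S}\cH^s_k(\bR^m)\right)\oplus\left(\bigoplus_{s\in S'}((x\wed)+\beta^{s,m}_{k-1}(x\bul))\,\cH^s_{k-1}(\bR^m)\right),
$$
orthogonal with respect to \eqref{L2product} since that inner product is $Pin(m)$-invariant. For the first family of summands, Theorem \ref{GT_HdR} supplies the orthogonal (Gelfand-Tsetlin) basis $\{f^{s,\nu}_{k,\mu}:(\nu,\mu)\in I^{s,m}_k\}$ of $\cH^s_k(\bR^m)$. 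For the second family, the operator $(x\wed)+\beta^{s,m}_{k-1}(x\bul)$ restricted to $\cH^s_{k-1}(\bR^m)$ is an injective $Pin(m)$-intertwining map onto its image (it is exactly the embedding factor appearing in Theorem \ref{tGMT}, and the image is a nonzero, hence irreducible, submodule equivalent to $\cH^s_{k-1}(\bR^m)$); therefore it carries the orthogonal basis $\{f^{s,\nu}_{k-1,\mu}:(\nu,\mu)\in I^{s,m}_{k-1}\}$ to a spanning set of the image. One must check this spanning set is still orthogonal: this follows because the map differs from a scalar multiple of an isometry by the Schur-type argument — on an irreducible module any two invariant inner products are proportional, so the pullback of the $L^2$-inner product along this intertwiner is a scalar multiple of the original invariant inner product on $\cH^s_{k-1}(\bR^m)$, which keeps the $f^{s,\nu}_{k-1,\mu}$ mutually orthogonal. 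Assembling these bases over all $s\in S$ and $s\in S'$, and using the orthogonality of the summands from Theorem \ref{tGMT}, yields an orthogonal basis of $\cM_k(\bR^m,\cliff_m^S)$; taking the union over $k$ finishes the proof, with the final formula $\beta^{s,m}_{k}=-(k+m-s)/(k+s)$ read off directly from Theorem \ref{tGMT}.

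The main obstacle, such as it is, is the orthogonality of the second family of basis vectors and their orthogonality to the first family: one needs to be slightly careful that applying the (non-unitary) operator $(x\wed)+\beta^{s,m}_{k-1}(x\bul)$ to an orthogonal basis still produces an orthogonal set, and this is precisely where the Schur/irreducibility argument enters rather than a brute-force computation with the explicit polynomials. Everything else is bookkeeping: matching index sets, invoking density, and citing Theorems \ref{tGMT} and \ref{GT_HdR} verbatim. I would therefore keep the written proof short, emphasizing the decomposition of Theorem \ref{tGMT} and the fact that an invariant isomorphism between irreducible modules is, up to scalar, an isometry for the invariant inner products, so orthogonality is preserved under the embedding factors.
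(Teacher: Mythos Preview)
Your proposal is correct and follows exactly the route the paper indicates: the paper's proof consists of the single sentence ``It is easy to see that, using Theorems \ref{tGMT} and \ref{GT_HdR}, we obtain the following result,'' and you have simply unpacked that sentence, including the Schur-type remark that any invariant intertwiner between irreducibles is a scalar multiple of an isometry, which justifies why the embedding factors preserve orthogonality. There is nothing to add.
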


\subsection*{Acknowledgments}

I am grateful to V. Sou\v cek for useful conversations.
The financial support from the grant GA 201/08/0397 is gratefully acknowledged.



\end{document}